\newcommand{\eel}{\mathcyr{l}}
\newcommand{\ees}{\mathcyr{s}}
\newcommand{\ZZ}{\mathbb{Z}}
\newcommand{\CCC}{\mathcal{C}}
\theoremstyle{plain}   %%amsart
\newtheorem{thm}{Theorem}[section]
\newtheorem{theorem}[thm]{Theorem}
\newtheorem{lemma}[thm]{Lemma}
\newtheorem{proposition}[thm]{Propsition}
\theoremstyle{remark}  %%amsart
\newtheorem{remark}[thm]{Remark}
\theoremstyle{definition}  %%amsart
\begin{document}

%\date{\today}

\author[Y. Numata]{NUMATA, Yasuhide}
 \address[Numata]{Department of Mathematical Sciences\\
 Shinshu University\\
 3-1-1 Asahi, Matsumoto-shi, Nagano-ken, 390-8621, Japan.}
% \email{nu@math.shinshu-u.ac.jp}
 \thanks{This work was supported by JSPS KAKENHI Grant Number JP25800009}
 \subjclass[2010]{18A99, 18A32, 18D99}
 \keywords{Quasi-schemoids; Association schemes; Hamming schemes; Morita equivalent}
\title{On functors between categories with colored morphisms}

\begin{abstract}
%$\mathcyr{a}\mathcyr{b}\mathcyr{c}\mathcyr{d}\mathcyr{e}\mathcyr{f}\mathcyr{g}\mathcyr{h}\mathcyr{i}\mathcyr{j}\mathcyr{k}\mathcyr{l}\mathcyr{m}\mathcyr{n}\mathcyr{o}\mathcyr{p}\mathcyr{q}\mathcyr{r}\mathcyr{s}\mathcyr{t}\mathcyr{u}\mathcyr{v}\mathcyr{w}\mathcyr{x}\mathcyr{y}\mathcyr{P}$
 In this paper, we consider categories with colored morphisms
 and functors such that
 morphisms assigned to  morphisms with a common color 
 have a common color.
 In this paper,
 we construct
 a morphism-colored functor 
 such that any morphism-colored functor from 
 a given small morphism-colored groupoid 
 to any discrete morphism-colored category
 factors through it.
 We also apply the main result to
 a schemoid constructed from a Hamming scheme.
\end{abstract}
\newcommand{\resp}{{\em resp.}\ }
\newcommand{\xgpd}{\widetilde{\operatorname{Gpd}}}
\newcommand{\Stab}{\operatorname{Stab}}
\newcommand{\obj}{\operatorname{Obj}}
\newcommand{\mor}{\operatorname{Mor}}
\newcommand{\Hom}{\operatorname{Hom}}
\newcommand{\target}{\operatorname{target}}
\newcommand{\source}{\operatorname{source}}
\newcommand{\id}{\operatorname{id}}
\newcommand{\Img}{\operatorname{Img}}
\newcommand{\FF}{\mathbb{F}}
\newcommand{\KK}{\mathbb{K}}
\newcommand{\uuu}{\operatorname{U}}
\newcommand{\Aut}{\operatorname{Aut}}
\newcommand{\Orb}{\operatorname{Orb}}
\newcommand{\sssim}{
%\ddot\sim
\stackrel{1}{%
\sim%
%\approx%
}
}
\newcommand{\ssim}{
%\dot\sim
%
\stackrel{0}{\sim}
}
\newcommand{\ssimo}{
%\dot\sim
%
\stackrel{0'}{\sim}
}

\newcommand{\GGG}{\mathcal{G}}
\newcommand{\UUU}{\mathcal{U}}
\maketitle
\newcommand{\defit}[1]{\textit{#1}}

 \section{Introduction}
 % Schemoids
 % Schemoid homs
 % Groupoids
 % Action groupoids
 % - Schurian
 In this paper, we consider categories with `colored' morphisms
 and functors $F$ such that
 the colors of $F(f)$ and $F(g)$ is the same
 if the colors of morphisms  $f$ and $g$ is the same.
 An example of such category is a schemoid
 introduced in Kuribayashi and Matsuo  \cite{MR3318282}.
 There exists a natural way to construct
 a schemoid from an association scheme.
 In this sense,
 a schemoid is a  generalization of
 an association scheme,
 introduced by Bose--Shimamoto \cite{MR0048772}.
 Kuribayashi and Matsuo also introduced 
 an analogue, called a schemoid algebra, of the Bose--Mesner algebra for an association scheme.
 The Bose--Mesner algebra is introduced in Bose--Mesner \cite{MR0102157},
 and plays an important role for algebraic study for association schemes.
 Kuribayashi and Matsuo studies the structure of
 schemoid algebras.
 Kuribayashi \cite{MR3306876} and 
 Kuribayashi--Momose \cite{arxiv:1507.01745}
 develop homotopy theory for schemoids.
 In  \cite{arxiv:1507.01745},  Kuribayashi and Momose
 shows that
 schemoids constructed from Hamming schemes over the field of order 2
 are Morita equivalent.
 The purpose of this paper is
 to generalize their methods to 
 small morphism-colored groupoids.

 This paper is organized as follows:
 In Section \ref{subsec:def}, we 
 consider a small morphism-colored groupoid $(\GGG,\eel)$
 such that 
  $\eel(f)=\eel(g)$ implies $\eel(f^{-1})=\eel(g^{-1})$, 
% satisfying $\eel(f^{-1})=\eel(g^{-1})$
% for $f,g\in\mor(\GGG)$ with $\eel(f)=\eel(g)$,
 and 
 define 
 a morphism-colored functor $\varpi(\GGG,\eel)$ 
 such that any morphism-colored functor 
 from $(\GGG,\eel)$ 
 to any discrete morphism-colored category $(\CCC,\id)$
 factors through $\varpi(\GGG,\eel)$.
 We show some lemmas in Section \ref{subsec:proof}.
 In Section \ref{sec:app},
 we apply our theorem to 
  schemoids constructed from Hamming schemes as an example.

  \section{Definition and Main results}
  \label{subsec:def}
  In this paper we call the pair $(\CCC,\eel)$
  a \defit{morphism-colored category}
  if the following condition holds:
  \begin{itemize}
   \item 
	 $\CCC$ is a category.
   \item 
	 $\eel$ assigns a `color' $\eel(f)$ to 
	 each morphism $f$ in $\CCC$.
   \item 
	 If $g,f_1,f_2\in\mor(\CCC)$ satisfies
	 $\eel(g)=\eel(f_1\circ f_2)$, then
	 there exist $g_1,g_2\in\mor(\CCC)$ such that
	\begin{align*}
	 &g=g_1\circ g_2,\\
	 &\eel(g_1)=\eel(f_1), \\
	 &\eel(g_2)=\eel(f_2). 
	\end{align*}
  \end{itemize}
 For morphisms $g,f_1,\ldots,f_l \in \mor(\CCC)$ 
 of a morphism-colored category $(\CCC,\eel)$ 
 satisfying $\eel(g)=\eel(f_1\circ\cdots\circ f_l)$,
 it is easy to show that
	there exist $g_1,\ldots,g_l\in\mor(\CCC)$ such that
	\begin{align*}
	 g&=g_1\circ\cdots\circ g_l,\\
	 \eel(g_i)&=\eel(f_i) \quad (i=1,\ldots,l).
	\end{align*}
 For a category $\CCC$,
 the pairs $(\CCC,\id)$ and $(\CCC,0)$ 
 are morphism-colored categories,
 where $\id(f)=f$ and $0(f)=0$ for each morphism $f$ in $\CCC$.
 In this paper,
 we call $(\CCC,\id)$  a \defit{discrete morphism-colored category}.
% We identify a category $\CCC$
% with the mophishm-colored category $(\CCC,\id)$.
 We say that a morphism-colored category $(\CCC,\eel)$ is small
 if 
 $\CCC$ is a small category and $\eel$ is a map from $\mor(\GGG)$ to a set.
 We call a morphism-colored category $(\GGG,\eel)$ 
 a \defit{morphism-colored groupoid}
 if $\GGG$ is a groupoid.

 Let $(\CCC,\eel)$ and $(\CCC',\eel')$ 
 be morphism-colored categories.
 Let $F$ be a functor from $\CCC$ to $\CCC'$.
% Assume that $\gamma(x)$ is defined for each 
% value $x=\eel(f)$ corresponding to $f\in \mor(f)$.
 We call a pair $(F,\gamma)$ a \defit{morphism-colored functor}
 from $(\CCC,\eel)$ to $(\CCC',\eel')$ 
 if $\gamma(\eel(f))=\eel'(F(f))$ for each morphism $f$ in $\CCC$.
 For a morphism-colored functor 
 $(F,\gamma)$ from $(\CCC,\eel)$ to $(\CCC',\eel')$ 
 and a morphism-colored functor $(F',\gamma')$ from $(\CCC',\eel')$ to $(\CCC'',\eel'')$, 
 we define the composition $(F',\gamma')\circ (F,\gamma)$ to be $(F'\circ F,\gamma'\circ \gamma)$.
 The composition 
$(F',\gamma')\circ (F,\gamma)$
of morphism-colored
 functors is a morphism-colored functor.

Let $(\GGG,\eel)$ be  a morphism-colored small groupoid.
Assume that 
$\eel(f^{-1})=\eel(g^{-1})$
for $f,g\in\mor(\GGG)$ with $\eel(f)=\eel(g)$.
  The main purpose of this section
  is to construct a groupoid $\UUU(\GGG,\eel)$
  and a morphism-colored functor $\varpi(\GGG,\eel)$ 
  from $(\GGG,\eel)$
  to the discrete morphism-colored category $(\UUU(\GGG,\eel),\id)$ 
  such that any morphism-colored functor $(F,\gamma)$ 
  from $(\GGG,\eel)$ 
   to any discrete morphism-colored category $(\CCC,\id)$
  factors through $\varpi(\GGG,\eel)$.
% In \ref{subsec:def}, we define 
% $(\UUU(\GGG,\eel),\id)$ and $\varpi(\GGG,\eel)$ 
% and show Theorems.
% We show the key lemmas in \ref{subsec:proof}.

  Fix a morphism-colored small groupoid $(\GGG,\eel)$ such that 
  $\eel$ is a map from $\mor(\GGG)$ to a set $I$
  satisfying $\eel(f^{-1})=\eel(g^{-1})$
  for all pairs $f,g\in\mor(\GGG)$ such that $\eel(f)=\eel(g)$.
  Let $I_1$ be the image of $\eel$ and $\eel_1$
  the surjection from $\mor(\GGG)$ to $I_1$ defined by $\eel_1(f)=\eel(f)$.
  We also define $I_0$ to be the set $\Set{\eel(\id_x)| x\in\obj(\GGG)}$.
  Let $\eel_0$ be the surjection from $\obj(\GGG)$ to $I_0$
  defined by $\eel_0(x)=\eel(\id_x)$.
  % The map $\eel$ from $\mor(\GGG)$ to $I$ induces
  % the map $\eel_0$ from $\obj(\GGG)$ to  $I$ 
  % defined by $\eel(\id_x)$ for each object $x\in\obj(\GGG)$.
  % Let $I_0$ be the image of $\eel_0$, and $I_1$ the image of $\eel$.
%  $I_0=\Set{\eel_0(x)|x\in\obj(\GGG)}=\Set{\eel(\id_x)|x\in \obj(\GGG)}$,
%  and $I_1=\Set{\eel(f)|f\in \mor(\GGG)}$.

  First we define the set $\bar I_1$ of morphisms in $\UUU(\GGG,\eel)$.
  We define the binary relation $\sssim$ on $I_1$ in the following manner:
  \begin{align*}
   \eel(f_1\circ \cdots \circ f_l)\sssim \eel(g_1\circ \cdots \circ g_l)
  \end{align*}
  for morphisms $f_1,\ldots,f_l,g_1,\ldots,g_l$
  such that　
  $\eel(f_1)=\eel(g_1),\ldots,\eel(f_l)=\eel(g_l)$.
  The relation $\sssim$ is an 
  equivalence relation on $I_1$ (Proposition \ref{prop35}).
    Let  $\ees_1$ be the canonical surjection from $I_1$ to 
    the set $\bar I_1$ of equivalence classes  with respect to $\sssim$.    
    For $a\in I_1$,
    $\ees_1(a)$ is the equivalence class containing $a$.
    The composition $\ees_1\circ\eel_1$ is
    a surjection from $\mor(\GGG)$ to $\bar I_1$.

%  Next we define the set $\bar I_0$ of objects of $\UUU(\GGG,\eel)$.
%  Let 
%  \begin{align*}
%   E_0&=  \emptyset, \\
%   R_0&= \Set{
% \begin{array}{c}
% (\source(f),\source(g))\\(\target(f),\target(g)) 
% \end{array}
% | (\ees_1\circ\eel_1)(f)=(\ees_1\circ\eel_1)(g) }.
%  \end{align*}
%  Moreover, we define 
%  $E_n\subset \bar I_1$,
%  $R_n\subset \obj(\GGG)\times \obj(\GGG)$ 
%  for $n>0$
%  by 
%  \begin{align*}
%   E_n&=  \bigcup_{(x,y)\in R_{n-1}} \Set{(\ees_1\circ\eel_1)(f)|f\in\Hom_{\GGG}(x,y)},\\
%   R_n&=R_0\cup 
% \Set{
% \begin{array}{c}
% (\source(f),\target(f))\\
% (\target(f),\source(f))
% \end{array}
% |(\ees_1\circ\eel_1)(f)\in E_n}.
% %  &\phantom{=}\cup \Set{(t(f),s(f))|\bar\eel(f)\in E_n},   \\
%  \end{align*}
%  Let $E=\bigcup_{n=0}^{\infty}E_n$, $R=\bigcup_{n=0}^{\infty}R_n$,
%  and 
%  \begin{align*}
%   R^{(0)}=\Set{(\eel_0(x),\eel_0(y))|(x,y)\in R}.
%  \end{align*}
%  We define the binary relation $\ssim$ on $I_0$ by
%  \begin{align*}
%   a\ssim b \iff (a,b)\in R^{(0)}.
%  \end{align*}
%  The relation $\ssim$ is an equivalence relation on $I_0$ (Proposition \ref{prop:eq}). 
%  Let $\ees_0$ be 
%  the canonical surjection from $I_0$ to
%  the set $\bar I_0$ of 
%  equivalence classes with respect to $\ssim$.
%  For $a\in I_0$,
%  $\ees_0(a)$ is the equivalence class containing $a$.
%  The composition $\ees_0\circ\eel_0$
%  is a map from $\obj(\GGG)$ 
%  to $\bar I_0$.
 Next we define the set $\bar I_0$ of objects of $\UUU(\GGG,\eel)$.
 We define the binary relation $\ssim$ on $I_0$ in the following manner:
 \begin{align*}
  \eel_0(\source(f))\ssim \eel_0(\source(g)) 
 \end{align*}
for $f,g\in\mor(\GGG)$ such that
$(\ees_1\circ\eel_1)(f)=(\ees_1\circ\eel_1)(g)$.
 The relation $\ssim$ is an equivalence relation on $I_0$ (Proposition \ref{prop:eq}). 
 Let $\ees_0$ be 
 the canonical surjection from $I_0$ to
 the set $\bar I_0$ of 
 equivalence classes with respect to $\ssim$.
 For $a\in I_0$,
 $\ees_0(a)$ is the equivalence class containing $a$.
 The composition $\ees_0\circ\eel_0$
 is a surjection from $\obj(\GGG)$ 
 to $\bar I_0$.

Since Propositions \ref{prop39} and \ref{prop310}
imply the well-definedness of the composition,
we have the following:
\begin{theorem}
% Since $\GGG$ is a groupoid,
 We can define the small groupoid $\UUU(\GGG,\eel)$ in the following manner:
 \begin{align*}
  \obj(\UUU(\GGG,\eel))&=\bar I_0,\\
  \mor(\UUU(\GGG,\eel))&=\bar I_1.
 \end{align*}
 The source of $(\ees_1\circ\eel_1)(f)\in \bar I_1$ is
 $(\ees_0\circ\eel_0)(\source(f))$
 and 
 the target of $(\ees_1\circ\eel_1)(f)\in \bar I_1$ is
 $(\ees_0\circ\eel_0)(\target(f))$.
 For a composable pair $(f,g)$ of morphisms in $\GGG$, 
 we define $(\ees_1\circ\eel_1)(f)\circ (\ees_1\circ\eel_1)(g)=(\ees_1\circ\eel_1)(f\circ g)$. 
\end{theorem}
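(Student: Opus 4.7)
The plan is to verify in turn each axiom of a small groupoid for the proposed data, relying freely on the cited Propositions~\ref{prop35}, \ref{prop:eq}, \ref{prop39} and \ref{prop310}. First I would check that the source and target maps descend from $\mor(\GGG)$ to $\bar I_1$. Suppose $(\ees_1\circ\eel_1)(f)=(\ees_1\circ\eel_1)(g)$, so $\eel(f)\sssim\eel(g)$; by the very definition of $\ssim$ this immediately gives $(\ees_0\circ\eel_0)(\source(f))=(\ees_0\circ\eel_0)(\source(g))$, so the source map is well defined. For the target I would invoke the standing hypothesis $\eel(f)=\eel(g)\Rightarrow\eel(f^{-1})=\eel(g^{-1})$: since $\sssim$ is generated from length-$l$ decompositions with pointwise matching colors, this hypothesis extends inductively to $\eel(f)\sssim\eel(g)\Rightarrow\eel(f^{-1})\sssim\eel(g^{-1})$, and then $\target(f)=\source(f^{-1})$ reduces the target claim to the source claim.

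Next, well-definedness of composition is the heart of the matter. Given $\bar a,\bar b\in\bar I_1$ with $\target(\bar a)=\source(\bar b)$, I would first use Proposition~\ref{prop39} to produce composable representatives $f,g\in\mor(\GGG)$ with $(\ees_1\circ\eel_1)(f)=\bar a$, $(\ees_1\circ\eel_1)(g)=\bar b$ and $\target(f)=\source(g)$, and then use Proposition~\ref{prop310} to show that the class $(\ees_1\circ\eel_1)(f\circ g)$ does not depend on the choice of such lifts. This step is delicate because two representatives $f_1,f_2$ of $\bar a$ only satisfy $\eel(f_1)\sssim\eel(f_2)$ rather than $\eel(f_1)=\eel(f_2)$, so matching them up with composable partners requires the factorization axiom in the definition of a morphism-colored category together with the cited propositions.

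After composition is in place the remaining axioms are routine. For each $\bar x\in\bar I_0$ I would define the identity as $(\ees_1\circ\eel_1)(\id_x)$ for any $x$ with $(\ees_0\circ\eel_0)(x)=\bar x$; the choice is immaterial because $\eel(\id_x)\sssim\eel(\id_y)$ whenever $\eel_0(x)=\eel_0(y)$, directly from the definition of $\eel_0$ and reflexivity of $\sssim$. Identity laws and associativity then follow by choosing composable lifts and applying the corresponding identities in $\GGG$. For inverses I would set the inverse of $(\ees_1\circ\eel_1)(f)$ to be $(\ees_1\circ\eel_1)(f^{-1})$; well-definedness is the same observation used for the target map, and the groupoid identities $f\circ f^{-1}=\id_{\target(f)}$ and $f^{-1}\circ f=\id_{\source(f)}$ in $\GGG$ project to the required identities in $\UUU(\GGG,\eel)$.

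The main obstacle is thus the well-definedness of composition, namely, the passage from two $\sssim$-classes with matching boundary in $\UUU(\GGG,\eel)$ to a canonical composite $\sssim$-class. This is precisely what Propositions~\ref{prop39} and \ref{prop310} are asserted to accomplish, so once they are in hand the theorem reduces to bookkeeping; everything else follows by projecting the corresponding facts from $\GGG$ along $\ees_1\circ\eel_1$ and $\ees_0\circ\eel_0$.
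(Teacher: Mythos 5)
Your proposal is correct and follows essentially the same route as the paper, whose entire ``proof'' of this theorem is the single observation that Propositions~\ref{prop39} and \ref{prop310} give the well-definedness of the composition; you additionally spell out the routine verifications (source/target maps, identities, inverses, associativity) that the paper leaves implicit. Two small slips are worth flagging: you have interchanged the roles of the two cited propositions --- Proposition~\ref{prop310} is the one that produces composable representatives, while Proposition~\ref{prop39} is the one that gives independence of the choice of lifts --- and your justification for the identity at $\bar x\in\bar I_0$ only treats representatives $x,y$ with $\eel_0(x)=\eel_0(y)$, whereas two representatives of the same class satisfy merely $\eel_0(x)\ssim\eel_0(y)$, so one must still argue (e.g.\ by writing $\id_{\source(f)}=f^{-1}\circ f$ and using the standing hypothesis on inverses together with Proposition~\ref{prop39}) that $(\ees_1\circ\eel_1)(\id_x)=(\ees_1\circ\eel_1)(\id_y)$.
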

% \begin{remark}
% The set of endomorphisms in  $\UUU(\GGG,\eel)$ is 
% \begin{align*}
%   E&=  \bigcup_{(x,y)\in R}
%  \Set{(\ees_1\circ\eel_1)(f)|f\in\Hom_{\GGG}(x,y)}
% \subset \bar I_1.
% \end{align*}
% \end{remark}

 There exists a functor $\bar\eel$ from $\GGG$ to $\UUU(\GGG,\eel)$
 defined by
  $\bar\eel(x)=(\ees_0\circ\eel_0)(x)$ for $x\in \obj(\GGG)$,
  $\bar\eel(f)=(\ees_1\circ\eel_1)(f) \in \bar I_1$ for $f\in\mor(\GGG)$.
  Moreover  $(\bar\eel,\ees_1)$ is
  a morphism-colored functor
  from $(\GGG,\eel)$ to 
  the discrete morphism-colored category
  $(\UUU(\GGG,\eel),\id)$.
  Define $\varpi(\GGG,\eel)$ to be
  the morphism-colored functor $(\bar\eel,\ees_1)$.
  We show that
   each morphism-colored functor $(F,\gamma)$ 
   from $(\GGG,\eel)$ to
   a discrete morphism-colored category $(\CCC,\id)$
   factors through $\varpi(\GGG,\eel)$.
  \begin{theorem}
   For a morphism-colored functor $(F,\gamma)$ 
   from $(\GGG,\eel)$ to a discrete morphism-colored category
   $(\CCC,\id)$,
   there uniquely exists 
   a morphism-colored functor $(\check F,\check \gamma)$
   such that $(\check F,\check \gamma)\circ \varpi(\GGG,\eel)= (F,\gamma)$.
  \end{theorem}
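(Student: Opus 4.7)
The plan is as follows. Uniqueness is a direct consequence of surjectivity. For existence, I would define $\check F$ on objects and morphisms using representatives drawn from $\GGG$, and define $\check\gamma$ as the unique factoring of $\gamma|_{I_1}$ through $\ees_1$. The real work is checking that all three definitions are well posed; this rests on the key observation that when the target is a discrete morphism-colored category $(\CCC,\id)$, the condition $\gamma(\eel(f))=F(f)$ pins $F$ down on a morphism entirely in terms of its color.

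For uniqueness, the maps $\ees_0\circ\eel_0$ and $\ees_1\circ\eel_1$ are surjective by construction, so $\bar\eel$ is surjective both on objects and morphisms, and $\ees_1$ is surjective. Hence any $(\check F,\check\gamma)$ satisfying $(\check F,\check\gamma)\circ\varpi(\GGG,\eel)=(F,\gamma)$ is forced to satisfy
\[
\check F((\ees_0\circ\eel_0)(x))=F(x),\quad \check F((\ees_1\circ\eel_1)(f))=F(f),\quad \check\gamma(\ees_1(a))=\gamma(a),
\]
which determines $(\check F,\check\gamma)$ completely.

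For existence I would take these forced formulas as the definition and verify well-definedness in three stages. First, for $\check\gamma$: if $\ees_1(a)=\ees_1(b)$ with $a=\eel(f_1\circ\cdots\circ f_l)$, $b=\eel(g_1\circ\cdots\circ g_l)$, and $\eel(f_i)=\eel(g_i)$, then because $(\CCC,\id)$ is discrete, $F(f_i)=\gamma(\eel(f_i))=\gamma(\eel(g_i))=F(g_i)$, so
\[
\gamma(a)=F(f_1)\circ\cdots\circ F(f_l)=F(g_1)\circ\cdots\circ F(g_l)=\gamma(b);
\]
general $\sssim$-equivalences then follow by transitivity. The same identity shows $F(f)=F(g)$ whenever $\bar\eel(f)=\bar\eel(g)$, giving well-definedness of $\check F$ on morphisms. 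Second, for $\check F$ on objects, in the base case one has $\eel_0(x)=\eel_0(\source(f))$ and $\eel_0(y)=\eel_0(\source(g))$ with $\bar\eel(f)=\bar\eel(g)$; then $F(f)=F(g)$ from the previous step, hence $F(\source(f))=\source(F(f))=\source(F(g))=F(\source(g))$. To extend this to arbitrary $\ssim$-equivalences I would additionally use that $\eel_0(x)=\eel_0(y)$ already implies $F(\id_x)=F(\id_y)$ and therefore $F(x)=F(y)$, so the whole transitive closure is controlled.

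Finally I would verify that $\check F$ is a functor: $\check F$ preserves identities and the composition law $(\ees_1\circ\eel_1)(f)\circ(\ees_1\circ\eel_1)(g)=(\ees_1\circ\eel_1)(f\circ g)$ by functoriality of $F$ together with the formula $\check F((\ees_1\circ\eel_1)(f))=F(f)$. The morphism-colored functor condition $\check\gamma(\id(\varphi))=\id(\check F(\varphi))$ collapses to $\check\gamma(\varphi)=\check F(\varphi)$ on $\bar I_1$, which holds since both sides equal $F(f)$ for any $f$ with $\bar\eel(f)=\varphi$. The main obstacle will be the well-definedness arguments across the two composed quotients: because $\ssim$ is defined via $\sssim$-equivalent representatives at the morphism level, the object-level argument depends critically on having first settled the morphism-level result, and one must be careful to chase through the transitive closures of both relations rather than only their defining instances.
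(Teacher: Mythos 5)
Your proposal is correct and follows essentially the same route as the paper: uniqueness forced by surjectivity of $\ees_0\circ\eel_0$, $\ees_1\circ\eel_1$ and $\ees_1$, and existence reduced to three well-definedness checks, all resting on the observation that for a discrete target $F(f)=\gamma(\eel(f))$ depends only on the color of $f$. The only cosmetic differences are the order of the checks and your explicit worry about transitive closures, which the paper discharges by having already proved (Propositions \ref{prop35} and \ref{prop:eq}) that the generating relations $\sssim$ and $\ssim$ are themselves equivalence relations, so single defining instances suffice.
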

  \begin{proof}
   Consider the morphism-colored functor $\varpi(\GGG,\eel)=(\bar\eel,\ees_1)$
   and a morphism-colored functor $(F,\gamma)$ from $(\GGG,\eel)$ to $(\CCC,\id)$. 
   In this case, we have $F(f)=F(g)$ for $f,g\in \mor(\GGG)$
   satisfying $\eel(f)=\eel(g)$.

   First we prove 
   the well-definedness of the correspondence  $\check F$
   from $\mor(\UUU(\GGG,\eel))$ 
   to  $\mor(\CCC)$ 
   such that
   $\check F((\ees_1\circ\eel_1)(f))=F(f)$ for $f\in \mor(\GGG)$.
   Let  $f,g\in \mor(\GGG)$ satisfy
   $(\ees_1\circ\eel_1)(f)=(\ees_1\circ\eel_1)(g)$.
   In this case,
   there exist $f_1,\ldots,f_l$ and $g_1,\ldots,g_l$
   such that 
   \begin{align*}
    f&=f_1\circ\cdots\circ f_l\\ 
    g&=g_1\circ\cdots\circ g_l\\ 
    \eel(f_i)&=\eel(g_i) \quad(i=1,\ldots,l).
   \end{align*}
   Since 
   $\eel(f_i)=\eel(g_i)$ implies $F(f_i)=F(g_i)$,
   we obtain $F(f)=F(g)$.

   Next we prove 
   the well-definedness of the  correspondence  $\check F$
   from $\obj(\UUU(\GGG,\eel))$ 
   to  $\obj(\CCC)$ 
   such that
   $\check F((\ees_0\circ\eel_0)(x))=F(x)$ for $x\in \obj(\GGG)$.
   Let $x,y\in \obj(\GGG)$ satisfy
   $(\ees_0\circ\eel_0)(x)=(\ees_0\circ\eel_0)(y)$.
   In this case,
   there exist $f,g\in \mor(\GGG)$
   such that
   \begin{align*}
    \eel_0(x)&=\eel_0(\source(f))\\
    \eel_0(y)&=\eel_0(\source(g))\\
    (\ees_1\circ \eel_1)(f)&=(\ees_1\circ \eel_1)(g).
   \end{align*}
   Since $\eel(\id_x)=\eel_0(x)=\eel_0(\source(f))=\eel(\id_{\source(f)})$,
   we have $F(\id_x)=F(\id_{\source(f)})$ which implies
   $F(x)=F(\source(f))=\source(F(f))$.
   Similarly we obtain $F(y)=\source(F(g))$.
   Since $(\ees_1\circ \eel_1)(f)=(\ees_1\circ \eel_1)(g)$,
   we have $F(f)=F(g)$.
   Hence 
 \begin{align*}
  F(x)=\source(F(f))=\source(F(g))=F(y). 
 \end{align*}

   Next we prove 
   the well-definedness of the  correspondence $\check \gamma$
   from $\mor(\UUU(\GGG,\eel))$ 
   to  $\mor(\CCC)$ 
   such that
   $\check\gamma(\ees_1(a))=\gamma(a)$ for $a\in I_1$.
   Let $a,b\in I_1$ satisfy
   $\ees_1(a)=\ees_1(b)$.
   In this case,
   there exist $f,g\in \mor(\GGG)$ such that
   $a=\eel(f)$ and $b=\eel(g)$.
   Since $\ees_1\circ\eel(f)=\ees_1(a)=\ees_1(b)=\ees_1\circ\eel(g)$, 
   we have $F(f)=F(g)$.
   Since  $(F,\gamma)$ is
    a morphism-colored functor
   from $(\GGG,\eel)$ to $(\CCC,\id)$,
   we have
   $\gamma(\eel(f))=F(f)$ and $\gamma(\eel(g))=F(g)$,
   which imply $\gamma(\eel(f))=\gamma(\eel(g))$.
   Hence $\gamma(a)=\gamma(b)$.

   It is easy to show that $(\check F,\check\gamma)$ is a morphism-colored
   functor,
   and that $(\check F,\check\gamma)\circ \varpi(\GGG,\eel)= (F,\gamma)$.
   Moreover, 
   the surjectivity of $\varpi(\GGG,\eel)$ implies the uniqueness of 
   $(\check F,\check\gamma)$.
  \end{proof}

\begin{remark}
 If $\bar I_0$ consists of one point,
 then $\bar I_1$ is a group.
 The group $\bar I_1$ is 
 isomorphic 
 to  the group 
 generated by $I_1$ with the following fundamental relations:
 \begin{align*}
%  \begin{cases}
   \eel(f\circ g) = \eel(f) \cdot \eel(g) 
%   &(\text{for a composable pair $(f,g)$ of morphisms in $\GGG$}).
%\\
 %  \eel(f_1\circ \cdots \circ f_l) = \eel(g_1\circ \cdots \circ g_l) 
 %  &\left(\begin{array}{c}
 %    \text{for nerves $(f_1 ,\ldots,f_l),(g_1 ,\ldots,g_l)$} 
 %     \\
%	   \text{in $\GGG$ such that $\eel(f_i)=\eel(g_i)$ for all $i$}
%	  \end{array}\right).
%  \end{cases}
 \end{align*}
for a composable pair $(f,g)$ of morphisms in $\GGG$.
 % Consider the case where 
 % $\bar I_0$ consists of one point.
 % In this case, the set of morphisms 
 % of $\UUU(\GGG,\eel)$ is a group.
 % By definition we have the following:
 % \begin{theorem}
 % If $\bar I_0$ consists of one point,
 % then $\UUU(\GGG,\eel)=(\set{\ast},\uuu(\GGG,\eel))$.  
 % \end{theorem}
\end{remark}

\begin{remark}
Consider 
another groupoid $(\GGG',\eel')$ such that
$\eel'(f)=\eel'(g)$ implies
$\eel'(f^{-1})=\eel'(g^{-1})$. 
In this case, we can also consider $\UUU(\GGG',\eel')$.
Let $\ees'_0$ and $\ees'_1$ be canonical surjections for $\UUU(\GGG',\eel')$. 
For a morphism-colored functor $(F,\gamma)$ 
from $(\GGG,\eel)$ to $(\GGG',\eel')$,
we can define a functor $\bar\gamma$ from $\UUU(\GGG,\eel)$ to $\UUU(\GGG',\eel')$
in the following manner:
$\bar\gamma(\ees_0(a))=\ees'_0(\gamma(a))$ for $\ees_0(a) \in \obj(\UUU(\GGG,\eel))$,
and
$\bar\gamma(\ees_1(b))=\ees'_1(\gamma(b))$ for $\ees_1(b) \in
 \mor(\UUU(\GGG,\eel))$.
In this sense, the operation $\UUU$ is functorial.
\end{remark}

\section{Proofs of lemmas}
\label{subsec:proof}

 % \begin{lemma}
 %  \label{lemma:move}
 %  Let $f,g\in\mor(\GGG)$ satisfy
 %  $(\ees_1\circ\eel_1)(f)=(\ees_1\circ\eel_1)(g)$.
 %  For $t\in\mor(\GGG)$ satisfying $\source(f)=\source(t)$,
 %  there exists $q\in\mor(\GGG)$
 %  such that  
 %  $(\ees_1\circ\eel_1)(t)=(\ees_1\circ\eel_1)(q)$
 %  and $\source(g)=\source(q)$.
 %  For $t'\in\mor(\GGG)$ satisfying $\target(f)=\target(t')$,
 %  there exists $q'\in\mor(\GGG)$
 %  such that  
 %  $(\ees_1\circ\eel_1)(t')=(\ees_1\circ\eel_1)(q')$
 %  and $\target(g')=\target(q')$.
 % \end{lemma}
 % \begin{proof}
 %  Consider the case where $\source(f)=\source(t)$.
 %  Let $t'=f\circ t^{-1}$.
 %  Since $f = t'\circ t$ and $(\ees_1\circ\eel_1)(f)=(\ees_1\circ\eel_1)(g)$,
 %  it follows from Lemma \ref{lem:morphism-colored} that
 %  there exist $q',q\in\mor(\GGG)$ such that
 %  $g=q'\circ q$, $(\ees_1\circ\eel_1)(q')=(\ees_1\circ\eel_1)(t')$ and $(\ees_1\circ\eel_1)(q)=(\ees_1\circ\eel_1)(t)$.
 %  The morphism $q$ satisfies $\source(q)=\source(g)$.
 %  Similarly we can show the lemma 
 %  for the case where $\target(f')=\target(t')$.
 % \end{proof}
First we prove a technical lemma for an arbitrary morphism-colored category.
 \begin{lemma}
  \label{lemma:move}
  Let $(\GGG,\eel)$ be a morphism-colored category.
  For  $f,g\in\mor(\GGG)$ satisfying
  $\eel(f)=\eel(g)$, the following hold:
  \begin{enumerate}
   \item   For $t\in\mor(\GGG)$ satisfying $\source(f)=\source(t)$,
	   there exists $q\in\mor(\GGG)$
	   such that  
	   $\eel(t)=\eel(q)$
	   and $\source(g)=\source(q)$.
   \item   For $t'\in\mor(\GGG)$ satisfying $\target(f)=\target(t')$,
	   there exists $q'\in\mor(\GGG)$
	   such that  
	   $\eel(t')=\eel(q')$
	   and $\target(g)=\target(q')$.
  \end{enumerate}
 \end{lemma}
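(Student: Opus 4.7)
The plan is to apply the morphism-colored category axiom, which lifts any color-decomposition of one morphism to a similarly colored decomposition of any other morphism of the same color. Since $\eel(f)=\eel(g)$, it suffices to exhibit a decomposition of $f$ that features $t$ (resp.\ $t'$) as the rightmost (resp.\ leftmost) factor; transferring this decomposition to $g$ then produces the required $q$ (resp.\ $q'$).

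For part (1), I would use the identity $f = (f\circ t^{-1})\circ t$, which is valid because $\source(f)=\source(t)=\target(t^{-1})$ makes $f\circ t^{-1}$ composable, and the collapse $t^{-1}\circ t=\id_{\source(t)}$ recovers $f$. Thus $\eel(g)=\eel(f)=\eel((f\circ t^{-1})\circ t)$, and the axiom supplies $g_1,g_2\in\mor(\GGG)$ with $g=g_1\circ g_2$, $\eel(g_1)=\eel(f\circ t^{-1})$, and $\eel(g_2)=\eel(t)$. Setting $q:=g_2$ gives $\source(q)=\source(g_2)=\source(g)$ and $\eel(q)=\eel(t)$, as required.

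For part (2), I would argue symmetrically from $f = t'\circ((t')^{-1}\circ f)$: this is composable because $\target(f)=\target(t')=\source((t')^{-1})$, and it collapses to $f$ via $t'\circ (t')^{-1}=\id_{\target(t')}$. The axiom then yields $g=g_1'\circ g_2'$ with $\eel(g_1')=\eel(t')$, and taking $q':=g_1'$ gives $\target(q')=\target(g_1')=\target(g)$ and $\eel(q')=\eel(t')$.

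The main obstacle is essentially bookkeeping: verifying that the sources and targets along the inserted factors $t^{-1}\circ t$ (resp.\ $t'\circ (t')^{-1}$) match up so that every composition is legal. A subtler conceptual point is that this argument relies on the inverses $t^{-1}$ and $(t')^{-1}$, so it implicitly assumes $\GGG$ is a groupoid; this is consistent with the use of $\GGG$ throughout the surrounding paper. I do not see a proof that avoids inverses, so for a genuinely arbitrary morphism-colored category one would probably need a different approach, or additional hypotheses on $t$ and $t'$.
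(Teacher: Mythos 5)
Your proof is essentially identical to the paper's: the paper also writes $f=(f\circ t^{-1})\circ t$, applies the color-decomposition axiom to $g$ (since $\eel(g)=\eel(f)$) to obtain $g=q'\circ q$ with $\eel(q)=\eel(t)$, and takes $q$ to be the right-hand factor, with the symmetric argument for part (2). Your closing observation is apt — the paper states the lemma for an arbitrary morphism-colored category, yet its own proof likewise uses $t^{-1}$, so it too implicitly relies on $\GGG$ being a groupoid, which is in fact the only setting in which the lemma is later applied.
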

 \begin{proof}
  First we prove the first proposition.
  Let $t'=f\circ t^{-1}$.
  Since $f = t'\circ t$ and $\eel(f)=\eel(g)$,
  there exist $q',q\in\mor(\GGG)$ such that
  $g=q'\circ q$, $\eel(q')=\eel(t')$ and $\eel(q)=\eel(t)$.
  The morphism $q$ satisfies $\source(q)=\source(g)$.
  Similarly we can prove the second proposition.
 \end{proof}
 We also prove a technical lemma for an arbitrary morphism-colored
 groupoid
 such that inverses of morphisms with a common color have a common
 color.
 \begin{lemma}
  \label{lemma:move2}
  Let a morphism-colored groupoid  $(\GGG,\eel)$
    satisfy $\eel(f^{-1})=\eel(g^{-1})$
  for all pairs $f,g\in\mor(\GGG)$ such that $\eel(f)=\eel(g)$.
  For  $f,g\in\mor(\GGG)$ satisfying
  $\eel(f)=\eel(g)$, the following hold:
  \begin{enumerate}
   \item   For $t\in\mor(\GGG)$ satisfying $\source(f)=\target(t)$,
	   there exists $q\in\mor(\GGG)$
	   such that  
	   $\eel(t)=\eel(q)$
	   and $\source(g)=\target(q)$.
   \item   For $t'\in\mor(\GGG)$ satisfying $\target(f)=\source(t')$,
	   there exists $q'\in\mor(\GGG)$
	   such that  
	   $\eel(t')=\eel(q')$
	   and $\target(g')=\source(q')$.
  \end{enumerate}
 \end{lemma}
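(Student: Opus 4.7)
The plan is to reduce to Lemma \ref{lemma:move} by replacing the auxiliary morphism with its inverse, exploiting both the groupoid structure and the hypothesis that equally colored morphisms have equally colored inverses.

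For assertion (1), the condition $\source(f)=\target(t)$ rewrites as $\source(f)=\source(t^{-1})$, since $\GGG$ is a groupoid. Lemma \ref{lemma:move}(1), applied to the triple $(f,g,t^{-1})$, then produces $p\in\mor(\GGG)$ with $\eel(p)=\eel(t^{-1})$ and $\source(p)=\source(g)$. I would then set $q:=p^{-1}$. The endpoint condition is immediate: $\target(q)=\source(p)=\source(g)$. For the color condition, the standing hypothesis on $(\GGG,\eel)$, applied to the equally colored pair $t^{-1},p$, gives $\eel(q)=\eel(p^{-1})=\eel((t^{-1})^{-1})=\eel(t)$, as required.

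Assertion (2) is completely symmetric. From $\target(f)=\source(t')=\target((t')^{-1})$, Lemma \ref{lemma:move}(2) applied to $(f,g,(t')^{-1})$ yields $p'\in\mor(\GGG)$ with $\eel(p')=\eel((t')^{-1})$ and $\target(p')=\target(g)$. Setting $q':=(p')^{-1}$ then gives $\source(q')=\target(p')=\target(g)$, which matches the intended conclusion (reading the $\target(g')$ in the statement as a typo for $\target(g)$), and the color identity $\eel(q')=\eel(t')$ follows again from the inverse-color hypothesis.

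There is no substantive obstacle; the only point requiring care is that the additional hypothesis of Lemma \ref{lemma:move2} must be invoked at precisely the moment when one turns $\eel(p)=\eel(t^{-1})$ into $\eel(q)=\eel(t)$. Without this hypothesis the reduction to Lemma \ref{lemma:move} would not close up, which explains why this strengthened assumption on $(\GGG,\eel)$ is imposed in the statement.
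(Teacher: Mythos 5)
Your proof is correct and follows essentially the same route as the paper: reduce to Lemma \ref{lemma:move} via $\source(t^{-1})=\target(t)=\source(f)$, then invert the resulting morphism and use the inverse-color hypothesis to transfer the color back. Your write-up is in fact more explicit than the paper's about exactly where that hypothesis enters, and you correctly flag $\target(g')$ as a typo for $\target(g)$.
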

 \begin{proof} 
  First we show the first proposition.
  Since 
  \begin{align*}
   \source(t^{-1})=\target(t)=\source(f),
  \end{align*}
  it follows from Lemma \ref{lemma:move} that
  there exists $q'$ such that 
	   $\eel(t^{-1})=\eel(q')$
	   and $\source(g)=\source(q)$.
  Let $q=(q')^{-1}$.
  It is easy to show that
	   $\eel(t)=\eel(q)$
	   and that $\source(g)=\target(q)$.
  Similarly we can show the second proposition.
 \end{proof}

 Now we show some propositions used in Section \ref{subsec:def}.
 Let us keep the same notation as in Section \ref{subsec:def}.
 First we show that $\sssim$ is an equivalence relation.
    \begin{proposition}
     \label{prop35}
     The relation $\sssim$ is an equivalence relation on $I_1$. 
    \end{proposition}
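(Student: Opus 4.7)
The plan is to check reflexivity, symmetry, and transitivity in turn. Reflexivity is immediate: for $a \in I_1$, pick $f \in \mor(\GGG)$ with $\eel(f) = a$ and use the defining relation with $l = 1$, $f_1 = g_1 = f$. Symmetry follows at once from the symmetry of the defining condition in $(f_i)$ and $(g_i)$.

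For transitivity I would argue by induction on $k$, the length of the matched factorization witnessing the second relation $b \sssim c$. The base case $k = 1$ is degenerate: $\eel(u_1) = \eel(h_1)$ forces $b = c$, so $a \sssim b$ at once yields $a \sssim c$. For the inductive step $k \geq 2$, fix witnesses $a = \eel(f_1\circ\cdots\circ f_l)$, $b = \eel(g_1\circ\cdots\circ g_l)$ with $\eel(f_i) = \eel(g_i)$, and $b = \eel(u_1\circ\cdots\circ u_k)$, $c = \eel(h_1\circ\cdots\circ h_k)$ with $\eel(u_j) = \eel(h_j)$. Writing $F = f_1\circ\cdots\circ f_l$ and $G = g_1\circ\cdots\circ g_l$, I would first apply the iterated morphism-colored category axiom to the equation $\eel(G) = \eel(u_1\circ\cdots\circ u_k)$ to obtain a refactorization $G = U_1\circ\cdots\circ U_k$ with $\eel(U_j) = \eel(h_j)$. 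Then Lemma~\ref{lemma:move}(1), applied with $f = g_l$, $g = f_l$, and $t = U_k$ (noting $\source(g_l) = \source(G) = \source(U_k)$), yields $F_k \in \mor(\GGG)$ with $\eel(F_k) = \eel(U_k) = \eel(h_k)$ and $\source(F_k) = \source(F)$; setting $F_{<k} = F \circ F_k^{-1}$ gives $F = F_{<k} \circ F_k$.

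The inverse-color hypothesis then yields $\eel(F_k^{-1}) = \eel(U_k^{-1})$, so the $(l{+}1)$-tuples $(f_1,\ldots,f_l,F_k^{-1})$ and $(g_1,\ldots,g_l,U_k^{-1})$ witness $\eel(F_{<k}) \sssim \eel(U_1\circ\cdots\circ U_{k-1})$, while the $(k{-}1)$-tuples $(U_1,\ldots,U_{k-1})$ and $(h_1,\ldots,h_{k-1})$ witness $\eel(U_1\circ\cdots\circ U_{k-1}) \sssim \eel(h_1\circ\cdots\circ h_{k-1})$. Since $k-1 < k$, the inductive hypothesis combines these into $\eel(F_{<k}) \sssim \eel(h_1\circ\cdots\circ h_{k-1})$. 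Unpacking this relation as matched $m$-tuples and using the axiom to refactor both $F_{<k}$ and $h_1\circ\cdots\circ h_{k-1}$ as $m$-fold compositions with pointwise equal colors, then appending $F_k$ and $h_k$ respectively, produces the $(m{+}1)$-matched factorizations witnessing $a \sssim c$.

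The hard part is the peeling step: Lemma~\ref{lemma:move} is what lets us extract a final factor $F_k$ whose color already matches $h_k$, and the inverse-color hypothesis is exactly what is needed to align $F_k^{-1}$ with $U_k^{-1}$ so that the residual $\sssim$-relation on $F_{<k}$ and $h_1\circ\cdots\circ h_{k-1}$ becomes a strictly shorter instance that the inductive hypothesis can close.
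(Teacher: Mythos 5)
Your proof is correct, but it takes a genuinely different route from the paper's. You argue by induction on the length $k$ of the witness for $b\sssim c$, peeling off a final factor of $F$ whose color matches $h_k$ (via Lemma~\ref{lemma:move} and inversion) and feeding the shortened instance back to the inductive hypothesis. The paper instead gives a one-shot splice: it refactors $p=p_1\circ\cdots\circ p_l$ against the colors of $g_1,\ldots,g_m$ to get $p=q_1\circ\cdots\circ q_m$, introduces the single pivot $t=(q_1\circ\cdots\circ q_{m-1})^{-1}\circ p_1$, verifies $t\circ p_2\circ\cdots\circ p_l=q_m$, and then redistributes $t$ into matched decompositions of $f$ and $h$ of common length $l+m-1$, with no induction. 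A noteworthy difference in hypotheses: your peeling step needs $\eel(F_k)=\eel(U_k)$ to imply $\eel(F_k^{-1})=\eel(U_k^{-1})$, so you invoke the standing inverse-color assumption on $(\GGG,\eel)$ (which is legitimate here), whereas the paper's argument for this particular proposition uses only the morphism-colored axiom and the existence of inverses, never comparing colors of inverses. So the paper's proof is slightly more general and shorter; yours is more systematic and makes the reduction mechanism explicit, at the cost of an induction and the extra hypothesis. Both are valid.
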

    \begin{proof}
%   Since the map $\eel\colon \mor(\GGG)\to I_1$ is surjective,
%   $a\sssim a$ for any $a\in I_1$.
%   It is also obvious that $b\sssim a$ for any $a$ and $b$
%   satisfying $a\sssim b$.
     It is easy to show the reflexivity and the symmetry of the relation. 
     We show the transitivity of the relation. 
     Let $a\sssim b$.
     There exist
     $f_1,\ldots,f_l$ and  $p_1,\ldots, p_l \in \mor(\GGG)$
     such that
  \begin{align*}
   a&=\eel(f_1\circ \cdots \circ f_l),\\
   b&=\eel(p_1\circ \cdots \circ p_l),\\
   \eel(f_i)&=\eel(p_i) \quad (i=1,\ldots,l).
  \end{align*}
    Let $f=f_1\circ \cdots \circ f_l$,
  and $p=p_1\circ \cdots \circ p_l$.
  Let $b\sssim c$.
  There exist
  $g_1,\ldots,g_m$ and  $h_1,\ldots,h_m \in \mor(\GGG)$
  such that
  \begin{align*}
   b&=\eel(g_1\circ \cdots \circ g_m),\\
   c&=\eel(h_1\circ \cdots \circ h_m),\\
   \eel(g_i)&=\eel(h_i) \quad (i=1,\ldots,m).
  \end{align*}
  Let $g=g_1\circ \cdots \circ g_m$,
  and $h=h_1\circ \cdots \circ h_m$.
  Since $\eel(p)=\eel(g)$,
  there exist $q_1,\ldots, q_m\in\mor(\GGG)$ such that
  $ \eel(q_i)=\eel(g_i)$ for all $i$ and $p=q_1\circ\cdots\circ q_m$.
  Let $t=(q_1\circ \cdots \circ q_{m-1})^{-1}\circ p_1$. 
  Since $p_1=q_1\circ \cdots \circ q_{m-1}\circ t$ and $\eel(p_1)=\eel(f_1)$,
  there exist $h'_1,\ldots,h'_{m-1},t'$ such that 
  $ \eel(h'_i)=\eel(q_i)$ for all $i$, 
  $\eel(t)=\eel(t')$ and $f_1=h'_1\circ\cdots\circ h'_{m-1}\circ t'$.
  On the other hand,
  we have
  \begin{align*}
   t\circ p_2\circ\cdots\circ p_l
   &= (q_1\circ \cdots \circ q_{m-1})^{-1}\circ p_1 \circ p_2\circ\cdots\circ p_l\\
   &= q_m\circ (q_1\circ \cdots \circ q_{m})^{-1}\circ p_1 \circ p_2\circ\cdots\circ p_l\\
   &= q_m\circ p^{-1}\circ p\\
   &= q_m.
  \end{align*}
  Since $q_m=t\circ p_2\circ\cdots\circ p_l$ and $\eel(q_m)=\eel(h_m)$,
  there exist $f'_2,\ldots,f'_{l},t''$ such that 
  $ \eel(f'_i)=\eel(p_i)$ for all $i$, 
  $\eel(t)=\eel(t'')$ and $h_m=t''\circ f'_2\circ\cdots\circ f'_{l}$.
  Hence we have the following decompositions for $f$ and $h$:
  \begin{align*}
   f&=h'_1\circ\cdots\circ h'_{m-1}\circ t'\circ f_2\circ \cdots \circ f_l,\\
   h&=h_1\circ\cdots\circ h_{m-1}\circ t''\circ f'_2\circ\cdots\circ f'_{l}.
  \end{align*}
  Therefore $\eel(f)=a\sssim \eel(h)=c$.
    \end{proof}

    Next we show that $(\GGG,\ees_1\circ\eel_1)$ is 
    a morphism-colored category.
  \begin{lemma}
   \label{lem:morphism-colored}
   If $g,f_1,f_2\in\mor(\GGG)$ satisfies
   $(\ees_1\circ\eel_1)(g)=(\ees_1\circ\eel_1)(f_1\circ f_2)$, then
   there exist $g_1,g_2\in\mor(\GGG)$ such that
  \begin{align*}
   g&=g_1\circ g_2,\\
   (\ees_1\circ\eel_1)(g_1)&=(\ees_1\circ\eel_1)(f_1),\\
   (\ees_1\circ\eel_1)(g_2)&=(\ees_1\circ\eel_1)(f_2).
  \end{align*}
  \end{lemma}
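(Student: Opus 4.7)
The plan is to unfold the defining relation of $\sssim$ to produce parallel length-$l$ decompositions of $g$ and of $f_1\circ f_2$ in $\GGG$, and then, exploiting the groupoid structure, to insert a ``bridge'' morphism $s$ that converts this into the desired 2-part splitting $g=g_1\circ g_2$.

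From the hypothesis $\eel(g)\sssim\eel(f_1\circ f_2)$, the defining rule of $\sssim$ supplies sequences $a_1,\ldots,a_l,b_1,\ldots,b_l\in\mor(\GGG)$ with $\eel(a_1\circ\cdots\circ a_l)=\eel(g)$, $\eel(b_1\circ\cdots\circ b_l)=\eel(f_1\circ f_2)$, and $\eel(a_i)=\eel(b_i)$ for every $i$. Applying the iterated morphism-colored category axiom of $(\GGG,\eel)$ to each of these color equalities, I may replace the $a_i$ and $b_i$ by morphisms for which $g=a_1\circ\cdots\circ a_l$ and $f_1\circ f_2=b_1\circ\cdots\circ b_l$ hold as identities in $\GGG$. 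In the degenerate case $l=1$ this already gives $\eel(g)=\eel(f_1\circ f_2)$, and the axiom directly produces $g=g_1\circ g_2$ with $\eel(g_i)=\eel(f_i)$.

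For $l\ge 2$, set $r=(b_2\circ\cdots\circ b_l)\circ f_2^{-1}$, a morphism in the groupoid $\GGG$ from $\target(f_2)$ to $\source(b_1)$ satisfying $f_1=b_1\circ r$ and $f_2=r^{-1}\circ b_2\circ\cdots\circ b_l$. Lemma \ref{lemma:move2}(1), applied to the equal-colored pair $b_1,a_1$ with $t=r$ (the required condition $\source(b_1)=\target(r)$ is built into the definition of $r$), yields $s\in\mor(\GGG)$ with $\eel(s)=\eel(r)$ and $\target(s)=\source(a_1)$. Define
\[
 g_1 = a_1\circ s,\qquad g_2 = s^{-1}\circ a_2\circ\cdots\circ a_l,
\]
which are composable and telescope to $g_1\circ g_2=g$ via $s\circ s^{-1}=\id$. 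The defining rule of $\sssim$ applied to the length-2 pair $(a_1,s)$ versus $(b_1,r)$ yields $\eel(g_1)\sssim\eel(f_1)$, and applied to the length-$l$ pair $(s^{-1},a_2,\ldots,a_l)$ versus $(r^{-1},b_2,\ldots,b_l)$ it yields $\eel(g_2)\sssim\eel(f_2)$; the identity $\eel(s^{-1})=\eel(r^{-1})$ needed for the second comparison uses the standing assumption that equal-colored morphisms in $(\GGG,\eel)$ have equal-colored inverses. The principal technical obstacle is the construction of the bridge $s$ with the correct source, target, and color, which is precisely what Lemma \ref{lemma:move2} is designed to deliver; once $s$ is in hand the remainder reduces to bookkeeping of compositions and a direct appeal to the generating rule of $\sssim$.
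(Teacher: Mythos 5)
Your argument is correct, and it shares the paper's overall strategy --- unfold the relation into parallel length-$l$ decompositions $g=a_1\circ\cdots\circ a_l$ and $f_1\circ f_2=b_1\circ\cdots\circ b_l$ with $\eel(a_i)=\eel(b_i)$, then use groupoid inverses to build a bridge morphism that realigns the splitting point --- but your transport step is genuinely different. The paper places the bridge at the other end: in your notation it sets $t''=(b_1\circ\cdots\circ b_{l-1})^{-1}\circ f_1$, so that $f_1=b_1\circ\cdots\circ b_{l-1}\circ t''$ and $b_l=t''\circ f_2$, and then splits $a_l$ directly by the morphism-colored-category axiom applied to the color equality $\eel(a_l)=\eel(t''\circ f_2)$, obtaining $a_l=q''\circ q'''$ with $\eel(q'')=\eel(t'')$ and $\eel(q''')=\eel(f_2)$, whence $g_1=a_1\circ\cdots\circ a_{l-1}\circ q''$ and $g_2=q'''$. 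In that route no morphism of unknown color is ever inverted, so the standing hypothesis that equal-colored morphisms have equal-colored inverses is not needed for this particular lemma. Your version instead transports the bridge via Lemma \ref{lemma:move2} and inserts $s\circ s^{-1}$ into the decomposition of $g$, which is equally valid but leans on the inverse-color hypothesis twice (once inside Lemma \ref{lemma:move2} and once for $\eel(s^{-1})=\eel(r^{-1})$). Since that hypothesis is in force throughout the section nothing is lost; the paper's variant is marginally more economical in its assumptions, while yours has the small advantage of the explicit factorizations $f_1=b_1\circ r$ and $f_2=r^{-1}\circ b_2\circ\cdots\circ b_l$, which make the final appeal to the generating rule of $\sssim$ completely transparent.
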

 \begin{proof}
  Let $f=f_1\circ f_2$ and $g$ satisfy
  $a=\eel(f)$ and $b=\eel(g)$.
  Assume that $\ees_1(a)=\ees_1(b)$.
  By definition,
  there exist $t'_1,\ldots,t'_n$ and  $q'_1,\ldots,q'_n \in \mor(\GGG)$
  such that
  \begin{align*}
   a&=\eel(t'_1\circ \cdots \circ t'_n),\\
   b&=\eel(q'_1\circ \cdots \circ q'_n),\\
   \eel(t'_i)&=\eel(q'_i) \quad (i=1,\ldots,n).
  \end{align*}
  Since $\eel(f)=a = \eel(t'_1\circ \cdots \circ t'_n)$,
  there exist $t_1,\ldots,t_n$ such that
  \begin{align*}
   f&=t_1\circ \cdots \circ t_n,\\
   \eel(t_i)&=\eel(t'_i) \quad (i=1,\ldots,n).
  \end{align*}
  Since $\eel(g)=b=\eel(q'_1\circ \cdots \circ q'_n)$,
  there exist $q_1,\ldots,q_n$ such that
  \begin{align*}
   g&=q_1\circ \cdots \circ q_n,\\
   \eel(q_i)&=\eel(q'_i)=\eel(t'_i)=\eel(t_i) \quad (i=1,\ldots,n).
  \end{align*}
  Let $t''=(t_1\circ\cdots \circ t_{n-1})^{-1}\circ f_1$.
  It follows that
 \begin{align*}
  t''\circ f_2 &=(t_1\circ\cdots \circ t_{n-1})^{-1}\circ f_1\circ f_2 \\
  &=(t_1\circ\cdots \circ t_{n-1})^{-1}\circ f\\
  &=(t_1\circ\cdots \circ t_{n-1})^{-1}\circ (t_1\circ\cdots \circ t_{n})=t_n.
 \end{align*} 
  Since  
  $t_n=t''\circ f_2$
  $\eel(t_n)=\eel(q_n)$,
  there exist  $q''\in\mor(\GGG)$ such that
  $q_n=q''\circ q'''$, $\eel(q'')=\eel(t'')$ and $\eel(f_2)=\eel(q''')$.
  Let $g_1=q_1\circ \cdots\circ q_{n-1}\circ q'' $
  and $g_2=q'''$.
  It follows that
  \begin{align*}
   g_1\circ g_2&= q_1\circ \cdots\circ q_{n-1}\circ q'' \circ q'''\\
   &=q_1\circ \cdots\circ q_{n-1}\circ q_n= g.
  \end{align*}
  Since $\eel(g_2)=\eel(q''')=\eel(f_2)$,
  we have $(\ees_1\circ\eel_1)(g_2)=(\ees_1\circ\eel_1)(f_2)$.
  Since $f_1=t_1\circ \cdots\circ t_{n-1}\circ t''$ and
  $g_1=q_1\circ \cdots\circ q_{n-1}\circ q''$,
  we have
  $(\ees_1\circ\eel_1)(f_1)=(\ees_1\circ\eel_1)(g_1)$.
 \end{proof}

Let  $\ssimo$ be the binary relation on $\obj(\GGG)$
defined by
$\source(f)\ssimo \source(g)$
for $f,g \in \mor(\GGG)$
such that $(\ees_1\circ\eel_1)(f)=(\ees_1\circ\eel_1)(g)$.
\begin{remark}
For a small morphism-colored groupoid $(\GGG,\eel)$
 such that $\eel(f)=\eel(g)$ implies $\eel(f^{-1})=\eel(g^{-1})$,
we have
 \begin{align*}
&  \Set{
(\source(f),\source(g))
| (\ees_1\circ\eel_1)(f)=(\ees_1\circ\eel_1)(g) }\\
&= \Set{
(\target(f),\target(g)) 
| (\ees_1\circ\eel_1)(f)=(\ees_1\circ\eel_1)(g) }.
 \end{align*}
\end{remark}
\begin{lemma}
\label{lem:equivformor}
The relation  $\ssimo$ 
is an equivalence relation on $\obj(\GGG)$.
\end{lemma}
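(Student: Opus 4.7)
The plan is to verify reflexivity, symmetry, and transitivity for $\ssimo$. Reflexivity is immediate: for any $x \in \obj(\GGG)$, take $f = g = \id_x$, so $(\ees_1\circ\eel_1)(f) = (\ees_1\circ\eel_1)(g)$ trivially, giving $x = \source(\id_x) \ssimo \source(\id_x) = x$. Symmetry follows from the symmetric form of the defining condition: if $\source(f) \ssimo \source(g)$ is witnessed by $f,g$ with a common $(\ees_1\circ\eel_1)$-color, then swapping the roles of $f$ and $g$ witnesses $\source(g) \ssimo \source(f)$.

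For transitivity, suppose $x \ssimo y$ is witnessed by a pair $(f_1,g_1)$ with $\source(f_1)=x$, $\source(g_1)=y$, $(\ees_1\circ\eel_1)(f_1)=(\ees_1\circ\eel_1)(g_1)$, and $y \ssimo z$ is witnessed by a pair $(f_2,g_2)$ with $\source(f_2)=y$, $\source(g_2)=z$, $(\ees_1\circ\eel_1)(f_2)=(\ees_1\circ\eel_1)(g_2)$. The goal is to produce a pair of morphisms starting at $x$ and $z$ respectively with equal $(\ees_1\circ\eel_1)$-color.

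The key observation is that Lemma \ref{lem:morphism-colored} precisely asserts that $(\GGG,\ees_1\circ\eel_1)$ is itself a morphism-colored category, so Lemma \ref{lemma:move} is available with color function $\ees_1\circ\eel_1$ in place of $\eel$. Applying Lemma \ref{lemma:move}(1) in this setting to the pair $(f_2,g_2)$ and the morphism $t := g_1$, which satisfies $\source(g_1)=y=\source(f_2)$, yields $q\in\mor(\GGG)$ with $(\ees_1\circ\eel_1)(q)=(\ees_1\circ\eel_1)(g_1)$ and $\source(q)=\source(g_2)=z$. Then $f_1$ and $q$ form the required witness for $x \ssimo z$, since $\source(f_1)=x$, $\source(q)=z$, and $(\ees_1\circ\eel_1)(f_1)=(\ees_1\circ\eel_1)(g_1)=(\ees_1\circ\eel_1)(q)$.

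The main conceptual step is recognizing that $\ees_1\circ\eel_1$ should be treated as a new color function and that the already-established Lemma \ref{lemma:move} can be reused at this higher level, rather than attempting to manipulate decompositions at the level of $\eel$ directly. Note that Lemma \ref{lemma:move}, and hence this argument, uses an inverse $t^{-1}$ in its proof, so the groupoid hypothesis on $\GGG$ is genuinely needed.
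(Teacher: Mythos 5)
Your proof is correct and follows essentially the same route as the paper: both first invoke Lemma \ref{lem:morphism-colored} to view $\ees_1\circ\eel_1$ as a color function making $(\GGG,\ees_1\circ\eel_1)$ a morphism-colored category, and then apply Lemma \ref{lemma:move}(1) at that level to transport one witnessing morphism to the required source object (you move the witness from $y$ to $z$, the paper moves it from $y$ to $x$ — a symmetric, immaterial difference).
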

\begin{proof}
It is easy to show 
 the reflexivity
and the symmetry
 of $\ssimo$.
We prove 
 the transitivity of the binary relation defined by $\ssimo$.
It follows from Lemma \ref{lem:morphism-colored} 
that $(\GGG,\ees_1\circ\eel_1)$ is a morphism-colored category.
Let $x,y,z\in\obj(\GGG)$ satisfy $x\ssimo y$ and $y\ssimo z$.
Since $x\ssimo y$, there exist morphisms $f$ and $f'\in\mor(\GGG)$ such that
$x=\source(f)$, $y=\source(f')$ and 
$(\ees_1\circ\eel_1)(f)=(\ees_1\circ\eel_1)(f')$.
Since $y\ssimo z$, there exist morphisms $g$ and $g'\in\mor(\GGG)$ such that
$y=\source(g)$, $y=\source(g')$ and 
$(\ees_1\circ\eel_1)(g)=(\ees_1\circ\eel_1)(g')$.
By applying
Lemma \ref{lemma:move}
to $f$, $f'$ and $g$, %and \ref{lemma:move2} 
we have a morphism $g''\in\mor(\GGG)$
such that $\source(g'')=\source(f)=x$ and 
that $(\ees_1\circ\eel_1)(g'')=(\ees_1\circ\eel_1)(g)=(\ees_1\circ\eel_1)(g')$.
Hence $x\ssimo z$.
\end{proof}

By Lemma \ref{lem:equivformor},
the binary relation $\ssimo$
is an equivalence relation on $\obj(\GGG)$.
Hence we have the following proposition.
\begin{proposition}
\label{prop:eq}
 The relation $\ssim$ is an equivalence relation on $I_0$.
\end{proposition}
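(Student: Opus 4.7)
The plan is to observe that the relation $\ssim$ on $I_0$ is essentially the pushforward of $\ssimo$ along $\eel_0$, and then deduce the three properties from the corresponding properties of $\ssimo$ established in Lemma \ref{lem:equivformor}. More precisely, unfolding the definition of $\ssim$, we have $a \ssim b$ for $a,b \in I_0$ if and only if there exist $x, y \in \obj(\GGG)$ with $\eel_0(x) = a$, $\eel_0(y) = b$, and $x \ssimo y$; this is because any morphism $f$ satisfies $\source(f) = \source(f)$ and any object $x$ arises as $\source(\id_x)$.

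Once this reformulation is made, reflexivity and symmetry of $\ssim$ follow immediately by picking any preimage $x \in \eel_0^{-1}(a)$ and invoking reflexivity, respectively symmetry, of $\ssimo$. The substantial step is transitivity, because given $a \ssim b$ and $b \ssim c$ one obtains preimages $x, y$ for $a, b$ and $y', z$ for $b, c$, where \emph{a priori} $y$ and $y'$ are different objects in $\eel_0^{-1}(b)$.

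The key auxiliary observation to bridge this gap is: if $\eel_0(y) = \eel_0(y')$, then $y \ssimo y'$. I would prove this by considering the identity morphisms $\id_y$ and $\id_{y'}$: since $\eel(\id_y) = \eel_0(y) = \eel_0(y') = \eel(\id_{y'})$, it follows tautologically that $(\ees_1 \circ \eel_1)(\id_y) = (\ees_1 \circ \eel_1)(\id_{y'})$, and since $\source(\id_y) = y$ and $\source(\id_{y'}) = y'$, the definition of $\ssimo$ yields $y \ssimo y'$. With this in hand, transitivity of $\ssimo$ (Lemma \ref{lem:equivformor}) gives $x \ssimo y \ssimo y' \ssimo z$, hence $x \ssimo z$, which translates back to $a \ssim c$.

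I do not expect any serious obstacle: the entire argument is just bookkeeping between the two relations, and Lemma \ref{lem:equivformor} already contains the nontrivial content (which itself relied on Lemma \ref{lemma:move} to handle source-matching). The only mildly subtle point is recognizing that two preimages of the same element of $I_0$ must lie in the same $\ssimo$-class, and this is handled by the identity-morphism trick above.
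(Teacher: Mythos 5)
Your proof is correct and follows the same route as the paper, which deduces the proposition directly from Lemma \ref{lem:equivformor} (the statement that $\ssimo$ is an equivalence relation on $\obj(\GGG)$). In fact you make explicit a step the paper leaves implicit: the observation that $\eel_0(y)=\eel_0(y')$ forces $y\ssimo y'$ via the identity morphisms, which is precisely what guarantees that the pushforward of $\ssimo$ along $\eel_0$ remains transitive.
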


% \begin{proposition}
% Let
%   $E=  \bigcup_{(x,y)\in R} \Set{(\ees_1\circ\eel_1)(f)|f\in\Hom_{\GGG}(x,y)}$.
%  If a morphism $f$ from $x$ to $y$ satisfies $(\ees_1\circ\eel_1)(f)\in E$,
% then $(x,y)\in R$.
% \end{proposition}
% \begin{proof}
% Since $(\ees_1\circ\eel_1)(f)\in E$,
% there exist $x',y'$ such that
% $(x',y')\in R_0$,
% $f'\in \Hom_{\GGG}(x,y)$
% and $(\ees_1\circ\eel_1)(f)=(\ees_1\circ\eel_1)(f')$.
% Since $(x',y')\in R_0$,
% there exist $h$ and $h'$ such that
%  $(\ees_1\circ\eel_1)(h)=(\ees_1\circ\eel_1)(h')$
% $\source(h)=x'$ and $\source(h')=y'$.
% By applying 
% Lemma \ref{lemma:move}
% to $f$, $f'$ and $h$, %and \ref{lemma:move2} 
% we have a morphism $h''$ such that
% $\source(h'')=x$ and  $(\ees_1\circ\eel_1)(h)=(\ees_1\circ\eel_1)(h'')$.
% By applying 
% Lemma \ref{lemma:move2}
% to $f$, $f'$ and $h'$,
% we have a morphism $h'''$ such that
% $\source(h''')=y$ and  $(\ees_1\circ\eel_1)(h')=(\ees_1\circ\eel_1)(h''')$.
% Since $h''$ and $h'''$ satisfies
% $\source(h'')=x$, $\source(h''')=y$ and  
% $(\ees_1\circ\eel_1)(h'')=(\ees_1\circ\eel_1)(h''')$,
% we obtain $(x,y)$ is in $R$.
% \end{proof}

Finally we show the well-definedness of the composition of morphisms
of our groupoid $\UUU(\GGG,\eel)$.
\begin{proposition}
     \label{prop39}
   If composable pairs $(f,g)$ and $(f',g')$ of morphisms in $\GGG$
 satisfy
  $(\ees_1\circ\eel_1)(f)=(\ees_1\circ\eel_1)(f')$ and
  $(\ees_1\circ\eel_1)(g)=(\ees_1\circ\eel_1)(g')$,
 then 
 \begin{align*}
  (\ees_1\circ\eel_1)(f\circ g)=(\ees_1\circ\eel_1)(f'\circ g').
 \end{align*}
\end{proposition}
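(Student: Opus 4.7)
The plan is to concatenate matching chain decompositions of the two composable pairs. First, I unpack the hypothesis $(\ees_1\circ\eel_1)(f)=(\ees_1\circ\eel_1)(f')$ as $\eel(f)\sssim\eel(f')$, so by the definition of $\sssim$ there exist $F_1,\ldots,F_l, P_1,\ldots,P_l\in\mor(\GGG)$ with $\eel(f)=\eel(F_1\circ\cdots\circ F_l)$, $\eel(f')=\eel(P_1\circ\cdots\circ P_l)$, and $\eel(F_i)=\eel(P_i)$ for each $i$. Applying the splitting axiom of the morphism-colored category $(\GGG,\eel)$ to $\eel(f)=\eel(F_1\circ\cdots\circ F_l)$ yields an actual decomposition $f=f_1\circ\cdots\circ f_l$ with $\eel(f_i)=\eel(F_i)$; applying it again to $f'$ yields $f'=p_1\circ\cdots\circ p_l$ with $\eel(p_i)=\eel(P_i)$. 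Hence $\eel(f_i)=\eel(p_i)$ for all $i$.

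I repeat the same reasoning for the pair $g,g'$ to obtain composable sequences $g=g_1\circ\cdots\circ g_m$ and $g'=q_1\circ\cdots\circ q_m$ with $\eel(g_j)=\eel(q_j)$ for every $j$. Since $(f,g)$ and $(f',g')$ are composable in $\GGG$, we have $\source(f_l)=\source(f)=\target(g)=\target(g_1)$ and similarly for the primed chain, so the concatenations $f_1,\ldots,f_l,g_1,\ldots,g_m$ and $p_1,\ldots,p_l,q_1,\ldots,q_m$ are composable and multiply to $f\circ g$ and $f'\circ g'$ respectively. These are two chains of the common length $l+m$ whose $\eel$-values match position by position, so the defining clause of $\sssim$ gives $\eel(f\circ g)\sssim\eel(f'\circ g')$, i.e., $(\ees_1\circ\eel_1)(f\circ g)=(\ees_1\circ\eel_1)(f'\circ g')$.

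I do not anticipate any genuine obstacle. The one subtlety worth flagging is that the relation $\sssim$ lives on $I_1$ and only involves colors, so the witnessing data produced directly by $\eel(f)\sssim\eel(f')$ is a pair of chains $F_\bullet,P_\bullet$ that need not have any relation to $f$ or $f'$ themselves. The third bullet in the definition of a morphism-colored category is precisely the tool needed to lift an equality $\eel(f)=\eel(F_1\circ\cdots\circ F_l)$ to a true decomposition of $f$ with the prescribed colors, and it is this lift that makes the concatenation step function. Neither the invertibility assumption on $\eel$ nor the groupoid structure is used in this proposition.
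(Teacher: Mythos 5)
Your proof is correct and takes essentially the same route as the paper's: decompose $f,f'$ (resp.\ $g,g'$) into color-matched composable chains and concatenate them to witness $\eel(f\circ g)\sssim\eel(f'\circ g')$. The paper's proof is terser---it asserts the decompositions of $f$ and $f'$ outright---whereas you correctly make explicit the lift from the $\sssim$-witnessing chains to actual factorizations of $f$ and $f'$ via the splitting axiom, which is the only nontrivial step.
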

\begin{proof}
   Let composable pairs $(f,g)$ and $(f',g')$  of morphisms in $\GGG$
 satisfy
  $\eel(f)\sssim \eel(f')$ and
  $\eel(g)\sssim \eel(g')$.
  There exist $f_1,\ldots,f_l$, $f'_1,\ldots,f'_l$
  such that 
  $f=f_1\circ\cdots\circ f_l$,
  $f'=f'_1\circ\cdots\circ f'_l$, and
  $\eel(f_1)=\eel(f'_1),\ldots,\eel(f_l)=\eel(f'_l)$.
  There also exist $g_1,\ldots,g_m$, $g'_1,\ldots,g'_m$
  such that 
  $g=g_1\circ\cdots\circ g_m$,
  $g'=g'_1\circ\cdots\circ g'_m$, and
  $\eel(g_1)=\eel(g'_1),\ldots,\eel(g_m)=\eel(g'_m)$.
  Hence   $\eel(f\circ g)\sssim \eel(f'\circ g')$.
\end{proof}
\begin{proposition}
     \label{prop310}
 For $f,g\in \mor(\GGG)$ satisfying
 $(\ees_0\circ\eel_0)(\source(f))=(\ees_0\circ\eel_0)(\target(g))$,
 there exists
 a composable pair $(f',g')$ of morphisms in $\GGG$ such that
 $(\ees_1\circ\eel_1)(f)=(\ees_1\circ\eel_1)(f')$ and
 $(\ees_1\circ\eel_1)(g)=(\ees_1\circ\eel_1)(g')$.
\end{proposition}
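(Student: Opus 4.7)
The plan is to keep $g$ fixed and to modify $f$ into a morphism $f'$ lying in the same $\ees_1\circ\eel_1$-class but with $\source(f')=\target(g)$; then $(f',g)$ is automatically a composable pair, so it suffices to construct $f'$.

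The first step is to translate the hypothesis into a relation on $\obj(\GGG)$. By the definition of $\ees_0$, the hypothesis $(\ees_0\circ\eel_0)(\source(f))=(\ees_0\circ\eel_0)(\target(g))$ is the statement $\eel_0(\source(f))\ssim\eel_0(\target(g))$ in $I_0$. I would first establish, as a small sublemma, that $\eel_0(x)\ssim\eel_0(y)$ on $I_0$ is equivalent to $x\ssimo y$ on $\obj(\GGG)$: given any morphism $h$ with $\eel_0(\source(h))=\eel_0(x)$, applying Lemma \ref{lemma:move} in $(\GGG,\eel)$ to the identities $\id_x$ and $\id_{\source(h)}$ (which share the $\eel$-color $\eel_0(x)$) together with $t:=h$ produces a morphism of the same $\eel$-color as $h$ whose source is exactly $x$. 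Using this sublemma on both sides rewrites the hypothesis as $\source(f)\ssimo\target(g)$, which by the definition of $\ssimo$ furnishes morphisms $u,v\in\mor(\GGG)$ with $\source(u)=\source(f)$, $\source(v)=\target(g)$ and $(\ees_1\circ\eel_1)(u)=(\ees_1\circ\eel_1)(v)$.

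The second step is a single application of Lemma \ref{lemma:move} inside the morphism-colored category $(\GGG,\ees_1\circ\eel_1)$, which is morphism-colored by Lemma \ref{lem:morphism-colored}. Applied to the pair $u,v$ (which share a color in that coarser structure) together with $t:=f$ (which satisfies $\source(t)=\source(u)$), the lemma produces $f'\in\mor(\GGG)$ such that $(\ees_1\circ\eel_1)(f')=(\ees_1\circ\eel_1)(f)$ and $\source(f')=\source(v)=\target(g)$. Setting $g':=g$, the pair $(f',g')$ is composable and manifestly satisfies the required color conditions.

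The main obstacle I anticipate is the descent step in the second paragraph, where a $\ssim$-equivalence between $\eel_0$-labels in $I_0$ has to be lifted to a $\ssimo$-equivalence between genuine objects in $\obj(\GGG)$; this is delicate because $\eel_0$ is not injective, so witnesses with the right source have to be produced by hand. Once this is done, the remainder of the proof collapses to one clean invocation of the technical Lemma \ref{lemma:move} in the coarser colored category $(\GGG,\ees_1\circ\eel_1)$.
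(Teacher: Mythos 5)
Your proposal is correct and follows essentially the same route as the paper: produce two morphisms with sources $\source(f)$ and $\target(g)$ sharing the same $\ees_1\circ\eel_1$-color, then apply Lemma \ref{lemma:move} in the coarser morphism-colored category $(\GGG,\ees_1\circ\eel_1)$ (via Lemma \ref{lem:morphism-colored}) to replace $f$ by $f'$ with $\source(f')=\target(g)$, keeping $g'=g$. The only difference is that you explicitly justify, by applying Lemma \ref{lemma:move} to identities, that the witnesses can be chosen with sources equal to $\source(f)$ and $\target(g)$ rather than merely having matching $\eel_0$-colors --- a detail the paper's proof asserts without comment.
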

  \begin{proof}
Since
   $(\ees_0\circ\eel_0)(\source(f))=(\ees_0\circ\eel_0)(\target(g))$,
there exist morphisms $h$ and $h'$ such that
$\source(h)=\source(f)$,
$\source(h')=\target(g)$,
and  $(\ees_1\circ\eel_1)(h)=(\ees_1\circ\eel_1)(h')$.
By applying 
Lemma \ref{lemma:move}
to $h$, $h'$ and $f$,
there exists $f'$ such that
 $(\ees_1\circ\eel_1)(f)=(\ees_1\circ\eel_1)(f')$ and
$\source(f')=\source(h')=\target(g)$.
Let $g'=g$.
Then
$(f',g')$ is  a composable pair such that
 $(\ees_1\circ\eel_1)(f)=(\ees_1\circ\eel_1)(f')$ and
 $(\ees_1\circ\eel_1)(g)=(\ees_1\circ\eel_1)(g')$.
 \end{proof}

 \section{Application}
 \label{sec:app}
 Consider the pair of a category $\CCC$ and
	$\eel$  which assigns a `color' $\eel(f)$ to 
	each morphism $f$ in $\CCC$.
	Assume that
 \begin{align*}
  \Set{ (f',g') \in\mor(\CCC)\times\mor(\CCC)
| 
   \begin{array}{c}
    \eel(f)=\eel(f'),\\ \eel(g)=\eel(g'),\\  
    f'\circ g' = h.
   \end{array}
 }  
 \end{align*}
 is a set for each $f,g,h\in\mor(\CCC)$.
 We write $N_h^{\eel(f),\eel(g)}$ to denote the set.
 The pair $(\CCC,\eel)$ satisfying the following condition
 is called a \defit{schemoid},
 at least when $\CCC$ is small:
 \begin{align*}
  \eel(h)=\eel(k) \implies
\text{there exists a bijection from $N_h^{\eel(f),\eel(g)}$ to $N_k^{\eel(f),\eel(g)}$}
 \end{align*}
 for each $f,g,h,k\in\mor(\CCC)$.
 It is easy to see that a schemoid is a morphism-colored category.
 For any category $\CCC$,
 the discrete morphism-colored category $(\CCC,\id)$
 is a schemoid.
 In this section,
 we consider 
 a schemoid which is equivalent to a Hamming scheme.
 We apply our main theorem to the schemoid.
 \begin{remark}
  A (small) schemoid in this paper is 
  called a quasi-schemoid in Kuribayashi--Matsuo \cite{MR3318282},
  and is defined to be a pair of small category 
  $\CCC$ and a partition $S$ of the morphisms with a condition.
  For a schemoid $(\CCC,\eel)$ in this paper,
  the pair $(\CCC,\Set{\eel^{-1}(\Set{i})|i\in I})$
  satisfies the condition in their paper.
 \end{remark}

%@@@@@@@@@@@@@@@@@@@@@@@@@@@@@@@@@@@@

Fix an integer $d$ such that $d>1$.
Let $G$ be the additive group $(\ZZ/n\ZZ)^d$.
 For $x=(x_1,\ldots,x_d)\in G$,
 we define $w(x)$ to be the Hamming weight $\#\Set{i|x_i\neq 0}$.
 We regard $w$ as a map from $G$ to $\Set{0,\ldots, d}$.
 Define $G/\!/G$ to be the action groupoid
 with respect to the natural action on $G$ of $G$, i.e.,
 the category whose set $G/\!/G$ of objects is $G$
 and whose set $\mor(G/\!/G)$ of morphisms is 
 $G\times G$.
 For $(g,x)\in \mor(G/\!/G)$, the source (\resp target) of $(g,x)$
 is $x$ (\resp $g+x$).
 For $x\in\obj(G/\!/G)$,
 the identity $\id_x$ on $x$ is $(0,x)$.
 The inverse of $(g,x)\in \mor((G/\!/G)$ is $(-g,g+x)$.
 Let $\pi$ be the projection 
 from $G/\!/G$ to $G$ defined by $\pi((g,x))=g$.
 The pair
 $(G/\!/G,\pi)$
 is a schemoid.
 Moreover
 $(G/\!/G,w\circ\pi)$
 is also a schemoid,
 which is equivalent to the Hamming scheme.
 Let 
\begin{align*}
  \GGG(d,n)&=(\ZZ/n\ZZ)^d/\!/(\ZZ/n\ZZ)^d, \\
  H(d,n)&=((\ZZ/n\ZZ)^d/\!/(\ZZ/n\ZZ)^d,w\circ\pi). 
\end{align*}
 Since 
$(w\circ\pi)((g,x)^{-1})=(w\circ\pi)((-g,g+x))=w(-g)=w(g)$,
  we have 
  \begin{align*}
   (w\circ\pi)((g,x)^{-1})=(w\circ\pi)((g',x')^{-1})
  \end{align*}
  for all pairs $(g,x),(g',x')\in\mor(\GGG)$ such that 
   $(w\circ\pi)((g,x))=(w\circ\pi)((g',x'))$.
 Now we apply our main theorem to the schemoid  
 $H(d,n)$.

 Let $\bar I_0=\obj(\UUU(H(d,n))$.
 Since $\id_x=(0,x)\in \mor(\GGG(d,n))$,
 we have $w\circ\pi(\id_x)=0$,
 which implies that $\bar I_0$ consists of one point.

 Let $\bar I_1=\mor(\UUU(H(n,d))$.
 Let $e_i$ be the $i$-th fundamental vector $(0,\ldots,0,1,0,\ldots,0)\in G$.
 It is obvious that $w\circ\pi((\sum_{i=1}^l e_i,x))=w(\sum_{i=1}^l e_i))=l$.
 Since 
\begin{align*}
 (\sum_{i=1}^l e_i,0)=(\sum_{i=1}^{l-1} e_i,e_l)\circ (e_l,0), 
\end{align*}
 $\bar I_0$ is a cyclic group generated by $\ees_1(1)=\ees_1\circ w\circ\pi((e_i,0))$.
 Let us use the symbol $+$ for the group operation of $\bar I_1$.

 If $n>2$, then
 \begin{align*}
  \ees_1(1)+\ees_1(1)
  &=\ees_1\circ w\circ\pi((e_1,e_1)\circ (e_1,0))\\
  &=\ees_1\circ w\circ\pi((2e_1,0))
  =\ees_1(1).
 \end{align*}
 Hence we have $\ees_1(1)=\ees_1(0)$,
 which implies that $\bar I_1$ consists of the unit.
 Therefore 
 the morphism-colored category $\UUU(H(d,n))$
 consists of one object $\ast$ and one identity $\id_\ast$,
 and
 $\varpi(H(d,n))$
 is 
 the trivial morphism-colored functor, i.e.,
 the morphism-colored functor $(\bar\eel,\ees)$
 defined by
  $\bar\eel(x)=\ast$ for $x\in \obj(\GGG(d,n))$,
  $\bar\eel(f)=\id_\ast$ for $f\in\mor(\GGG(d,n))$,
  $\ees(a)=\id_\ast$ for $a\in\Set{0,1,\ldots,d}$.
 In this case,
 any morphism-colored functor from $H(d,n)$ 
 to any discrete morphism-colored category
 factors through  the trivial morphism-colored functor.
 Therefore we have the following:
 \begin{proposition}
  Fix a category $\CCC$.
  Let $n,n',d,d'$ be positive integers.
  Assume that %$n$ and $n'$ are greater than $2$.
  $n, n'>2$.
  Each morphism-colored functor
  from the schemoid $H(d,n)$ 
  to the discrete morphism-colored category $(\CCC,\id)$
  factors through the trivial morphism-colored functor.
  Each morphism-colored functor from $H(d',n')$ to $(\CCC,\id)$
  also factors through  the trivial morphism-colored functor.
  Therefore
  we have a one-to-one correspondence between 
  morphism-colored functors from $H(n,d)$ to $(\CCC,\id)$
  and
  morphism-colored functors from $H(n',d')$ to $(\CCC,\id)$.
 \end{proposition}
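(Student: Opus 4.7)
The plan is to leverage the main factorization theorem together with the explicit computation of $\UUU(H(d,n))$ sketched in the paragraphs preceding the statement. First, I would invoke the inverse-color compatibility established just above, which shows that $H(d,n)$ (and symmetrically $H(d',n')$) satisfies the hypothesis of the main theorem; consequently every morphism-colored functor from $H(d,n)$ to $(\CCC,\id)$ factors uniquely through $\varpi(H(d,n))$, and likewise for $H(d',n')$. This reduces the question to understanding $\UUU(H(d,n))$ for $n>2$.

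Next I would show that when $n>2$, the groupoid $\UUU(H(d,n))$ is the trivial one-object, one-morphism groupoid. The set $\bar I_0$ is a singleton because every identity in $\GGG(d,n)$ has $w\circ\pi$-value zero, so all objects become $\ssim$-equivalent. For $\bar I_1$, the preceding discussion identifies it as a cyclic group generated by $\ees_1(1)$. The crucial relation comes from the decomposition $(2e_1,0)=(e_1,e_1)\circ(e_1,0)$: when $n>2$, the element $2e_1$ still has Hamming weight $1$, so applying $\ees_1\circ w\circ\pi$ to both sides yields $\ees_1(1)+\ees_1(1)=\ees_1(1)$, which forces the cyclic group to collapse to the trivial group.

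With $\varpi(H(d,n))$ identified as the trivial morphism-colored functor for every admissible $(d,n)$, both sides of the desired correspondence reduce, via the main theorem, to the set of morphism-colored functors from the trivial morphism-colored groupoid to $(\CCC,\id)$. Such a functor is determined by a single choice of object $c\in\obj(\CCC)$: the unique morphism is forced to $\id_c$ and the $\gamma$-component is then forced to send the unique color to $\id_c$ as well. Hence both morphism-colored functor sets are canonically in bijection with $\obj(\CCC)$, and composing these bijections yields the required one-to-one correspondence. The main conceptual content is already packaged into the main factorization theorem and the weight calculation, so I do not anticipate a substantive obstacle; the only point requiring care is checking that the bijection with $\obj(\CCC)$ is canonical on both sides, so that the composite is well-defined independently of representatives.
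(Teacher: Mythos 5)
Your proposal is correct and follows essentially the same route as the paper: verify the inverse-color hypothesis so the main theorem applies, observe $\bar I_0$ is a singleton since all identities have weight $0$, use the decomposition $(2e_1,0)=(e_1,e_1)\circ(e_1,0)$ with $w(2e_1)=1$ for $n>2$ to collapse the cyclic group $\bar I_1$ to the trivial group, and conclude via the universal property. The only difference is that you make the final bijection explicit by identifying both functor sets with $\obj(\CCC)$, which the paper leaves implicit.
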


 Consider the case where $n=2$.
 In this case,
 $\sum w(x^{(i)})$ is even
 for $x^{(1)},\ldots,x^{(k)}\in G$ satisfying $\sum_{i=1}^d x^{(i)}=0$.
 Hence the fundamental relation of $\bar I_1$ does not change the parity.
 We also have
 \begin{align*}
  \ees_1(1)+\ees_1(1)
  &=\ees_1\circ w\circ\pi((e_1,e_1)\circ (e_1,0))\\
  &=\ees_1\circ w\circ\pi((2e_1,0))
  =\ees_1\circ w\circ\pi((0,0))
  =\ees_1(0).
 \end{align*}
 Hence $\bar I_1$ is isomorphic to $\ZZ/2\ZZ$.
 Therefore
 $\UUU(H(d,2))$
 is the groupoid $\Set{ \ast }/\!/(\ZZ/2\ZZ)$
 with one object $\ast$ and two morphisms $0=\id_{\ast}$ and $1$;
 and
 $\varpi(H(d,2))$
 is 
 the morphism-colored functor $(\bar\eel,\ees_1)$
 defined by
  $\bar\eel(x)=\ast$ for $x\in \obj(\GGG(d,2))$,
  $\bar\eel((g,x))=\sum_{i=1}^d g_i$ for $(g,x)\in\mor(\GGG(d,2))$,
and $\ees_1(a)=a$ mod $2$ for $a\in\Set{0,1,\ldots,d}$.
 In this case,
 any morphism-colored functor from $H(d,2)$ 
 to any discrete morphism-colored category
 factors through  the morphism-colored functor $(\bar\eel,\ees)$
 to 
 the discrete morphism-colored category
 $(\Set{ \ast }/\!/(\ZZ/2\ZZ),\id)$.
% Therefore 
 Hence we have the following proposition,
 which showed in Kuribayashi--Momose \cite{arxiv:1507.01745}:
 \begin{proposition}
  \label{prop:43}
  Fix a category $\CCC$.
%  Let $d,d'$ be a positive integer.
%  Each morphism-colored functor from $H(d,2)$ to  $(\CCC,\eel)$
%  factors through 
%  the morphism-colored functor $(\bar\eel,\ees)$.
%  Each morphism-colored functor from $H(d',2)$ to  $(\CCC,\eel)$
%  also factors through  
%  the morphism-colored functor $(\bar\eel,\ees)$.
%  Therefore
For  positive integers  $d,d'$,
  we have a one-to-one correspondence between 
  morphism-colored functors from $H(d,2)$ to $(\CCC,\id)$ 
  and
  morphism-colored functors from $H(d',2)$ to $(\CCC,\id)$.
\end{proposition}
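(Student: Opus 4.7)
The plan is to apply the universal property from the main theorem of Section \ref{subsec:def} twice and observe that the target universal object does not depend on $d$ when $n=2$.

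First I would recall the preceding computation, which shows that $\UUU(H(d,2))$ is the groupoid $\Set{\ast}/\!/(\ZZ/2\ZZ)$ for every positive integer $d$, together with the morphism-colored functor $\varpi(H(d,2))$. The essential point is that this universal groupoid is the same object, independently of $d$: both the set $\bar I_0$ and the group $\bar I_1\cong \ZZ/2\ZZ$ are computed uniformly for all $d\geq 1$.

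Next I would invoke the universal property: by the main theorem, for any discrete morphism-colored category $(\CCC,\id)$, composing with $\varpi(H(d,2))$ gives a bijection
\begin{align*}
\Hom_{\mathrm{mc}}\bigl((\Set{\ast}/\!/(\ZZ/2\ZZ),\id),(\CCC,\id)\bigr)
\;\longrightarrow\;
\Hom_{\mathrm{mc}}\bigl(H(d,2),(\CCC,\id)\bigr),
\end{align*}
where $\Hom_{\mathrm{mc}}$ denotes the set of morphism-colored functors (surjectivity of $\check F$ onto its required values and the uniqueness clause in the theorem's proof supply the bijection). Applying the same theorem with $d$ replaced by $d'$ yields the analogous bijection with $H(d',2)$ in place of $H(d,2)$.

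The desired one-to-one correspondence is then obtained by composing one bijection with the inverse of the other, both having the same domain $\Hom_{\mathrm{mc}}((\Set{\ast}/\!/(\ZZ/2\ZZ),\id),(\CCC,\id))$. There is no real obstacle here: all the substantive work has already been carried out in establishing $\UUU(H(d,2))\cong \Set{\ast}/\!/(\ZZ/2\ZZ)$ and in proving the universal factorization theorem. The only thing to be careful about is that the factorization theorem is applied to a discrete target and that $H(d,2)$ indeed satisfies the hypothesis $\eel(f^{-1})=\eel(g^{-1})$ whenever $\eel(f)=\eel(g)$, which was verified earlier using $w(-g)=w(g)$.
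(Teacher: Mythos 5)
Your proposal is correct and follows essentially the same route as the paper: the text preceding Proposition \ref{prop:43} computes $\UUU(H(d,2))\cong \Set{\ast}/\!/(\ZZ/2\ZZ)$ independently of $d$ and then invokes the unique-factorization theorem to identify morphism-colored functors out of $H(d,2)$ with those out of $(\Set{\ast}/\!/(\ZZ/2\ZZ),\id)$, exactly as you do. Your remark that existence plus uniqueness in the main theorem is what upgrades the factorization to a bijection, and your check of the hypothesis $\eel(f^{-1})=\eel(g^{-1})$ via $w(-g)=w(g)$, match the paper's (largely implicit) argument.
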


\end{document}